\documentclass[11pt]{amsart}
\usepackage{oldgerm}
\usepackage{amssymb} 
\usepackage{mathrsfs} 
\usepackage{amsmath}
\usepackage{latexsym}
\usepackage[all]{xy}
\usepackage[dvips]{graphics}
\usepackage{amsthm}
\usepackage{enumerate}
\usepackage{bm}
\usepackage{float}
\usepackage{tikz}
\newcommand{\lt}{{}^\mathrm t\hspace{-0.5mm}}
\newcommand{\ctext}[1]{\raise0.2ex\hbox{\textcircled{\scriptsize{#1}}}}
\newcommand{\pmat}[1]{\begin{pmatrix} #1 \end{pmatrix}}

\theoremstyle{plain} 
\newtheorem{thm}{Theorem}[section] 
\newtheorem{prop}[thm]{Proposition} 
\newtheorem{lem}[thm]{Lemma} 
\newtheorem{cor}[thm]{Corollary} 
\newtheorem*{conj}{Conjecture}
\theoremstyle{definition} 
\newtheorem{ex}[thm]{Example} 
\newtheorem{rem}[thm]{Remark} 
\newtheorem{const}{Construction} 
\theoremstyle{remark} 
\makeatletter
\@namedef{subjclassname@2020}{\textup{2020} Mathematics Subject Classification}
\makeatother

\title[Combinatorics of rational curves on a smooth Hermitian surface]
{Combinatorial structure of 
low degree rational curves on a smooth\\Hermitian surface
}

\author
{Norifumi Ojiro}
\address{
Deta and Mathematical Sciences Course, 
Graduate School of Software
Information Science, Iwate Prefectural University \\
152-52, Sugo, Takizawa, Iwate, 020-0693, Japan
}
\email{norifumi.ojiro@gmail.com}

\date{}

\subjclass[2020]{51E12,51E20,51E25,05E14,05E18,05E30.}
\keywords{strongly regular graph, association scheme,
generalized quadrangle, character table, incidence relation of rational curves in positive characteristic, transitive action by automorphism group of a Hermitian surface.
}

\begin{document}
\maketitle

\begin{abstract}
A smooth Hermitian surface $X$ is a projective surface isomorphic to the Fermat surface of degree $q+1$ in positive characteristic.
We study incidence relations of the rational curves of degree $q+1$ contained in $X$, and show that such curves produce a family of
certain strongly regular graphs and association schemes.
\end{abstract}

\section{Introduction}
Let $k$ be the algebraic closure of a finite field $\mathbb{F}_{q^2}$ for $q$ a power of a prime $p$. A smooth $k$-Hermitian surface is a surface in $\mathbb{P}^3(k)$ isomorphic to the Fermat surface $X_I$ defined by $x_0^{q+1}+x_1^{q+1}+x_2^{q+1}+x_3^{q+1}=0$. Especially, the surface isomorphic over $\mathbb{F}_{q^2}$ to $X_I$ is usually called a Hermitian surface.
 The following proposition is fundamental.
 \begin{prop}[\cite{BC},\cite{C},\cite{Se}]\label{SS}
Let $X$ be a smooth $k$-Hermitian surface and ${\rm Aut}(X)$ the group of projective automorphisms of $X$. Let $X(\mathbb{F}_{q^2})$ be the set of all the $\mathbb{F}_{q^2}$-rational points of $X$ and $\mathcal{L}(X)$ the set of all the lines on $X$. Then ${\rm Aut}(X)$ acts transitively on $X(\mathbb{F}_{q^2}),\mathcal{L}(X)$, and moreover, the numbers
of them are equal to $(q^3+1)(q^2+1),(q^3+1)(q+1)$, respectively.
 \end{prop}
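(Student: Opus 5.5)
Since everything is invariant under projective isomorphism, I would first reduce to the Fermat model $X=X_I$, or better to the Hermitian form model $X=\{x\in\mathbb{P}^3 : h(x,x)=0\}$ with $h(x,y)=\sum_i x_i y_i^{q}$. The key observation is that the unitary group $U_4(q^2)=\{A\in GL_4(\mathbb{F}_{q^2}) : \lt A\,\bar A=I\}$, where $\bar A=A^{(q)}$ is the entrywise $q$-th power, preserves $h$. Hence $PU_4(q^2)$ embeds into ${\rm Aut}(X)$. I will only need this subgroup for transitivity. I will not need to show it is all of ${\rm Aut}(X)$.

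\textbf{Step 1: transitivity via Witt's theorem.} A point $P=[v]\in X(\mathbb{F}_{q^2})$ is an isotropic vector $v$ of the nondegenerate Hermitian space $V=\mathbb{F}_{q^2}^4$. By Witt's extension theorem for nondegenerate Hermitian forms over finite fields, any isometry between subspaces extends to $V$. So $U_4(q^2)$ is transitive on isotropic vectors, hence on $X(\mathbb{F}_{q^2})$.

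\textbf{Step 2: lines are rational and totally isotropic.} I first show every line $\ell\subset X$ is defined over $\mathbb{F}_{q^2}$ and is a totally isotropic $2$-plane. Take a point $x\in\ell$. The tangent plane $T_xX$ is $\{y : \sum x_i^q y_i=0\}$, and since $\ell\subset X$ we have $\ell\subset T_xX$. Then $\ell\subset X\cap T_xX$, and a direct check shows this intersection is the union of $q+1$ lines through $x$. Polarizing $h(x+ty,x+ty)=0$ identically in $t$ gives $\sum x_i^q y_i=0$ and $\sum x_i y_i^q=0$ for all $x,y$ spanning $\ell$. Applying Frobenius twice shows $\ell^{(q^2)}=\ell$, so $\ell$ is $\mathbb{F}_{q^2}$-rational. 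Witt's theorem applied to totally isotropic $2$-dimensional subspaces then gives transitivity on $\mathcal{L}(X)$.

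\textbf{Step 3: counting.} I count $|X(\mathbb{F}_{q^2})|$ by the affine count of nonzero solutions to $\sum_{i=0}^3 N(x_i)=0$, where $N:\mathbb{F}_{q^2}\to\mathbb{F}_q$ is the norm. Every fibre of $N$ over $\mathbb{F}_q^\times$ has size $q+1$, and this gives the recursion
\[
a_n=q^{2(n-1)}+(-1)^n(q-1)q^{n-1}
\]
for the number of solutions in $n$ variables. Taking $n=4$ gives $a_4-1=q^6+q^5-q^3-1=(q^3+1)(q^3-1)+q^5-q^3$. Dividing by $q^2-1$ yields $(q^3+1)(q^2+1)$.

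For lines, I use the tangent-plane fact from Step 2: each rational point lies on exactly $q+1$ lines. Each line has $q^2+1$ rational points. Double counting point–line incidences gives
\[
|\mathcal{L}(X)|=\frac{(q^3+1)(q^2+1)(q+1)}{q^2+1}=(q^3+1)(q+1).
\]

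\textbf{Main obstacle.} The hardest step is Step 2: proving that every line over $k$ is $\mathbb{F}_{q^2}$-rational and that $X\cap T_xX$ consists of exactly $q+1$ lines through $x$. For the latter, I would use transitivity from Step 1 to move $x$ to a convenient point, such as $[1:\zeta:0:0]$ with $\zeta^{q+1}=-1$. After a unitary change of coordinates the tangent section becomes $y_2^{q+1}+y_3^{q+1}=0$, which visibly splits into $q+1$ lines.
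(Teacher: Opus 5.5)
The paper gives no proof of this proposition; it only cites \cite{BC}, \cite{C}, \cite{Se}. Your plan is essentially the classical argument behind those references: the unitary group plus Witt's theorem for transitivity, a norm-form count for the points, and $q+1$ lines through each rational point plus double counting for the lines. The transitivity steps and the polarization/Frobenius proof that every line is $\mathbb{F}_{q^2}$-rational are sound. That proof works because $W^{(q)}=W^{\perp}$ for the bilinear form $\sum u_iv_i$, by dimension count.

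\textbf{Step 3 is wrong as written.} Your closed form $a_n=q^{2(n-1)}+(-1)^n(q-1)q^{n-1}$ fails already at $n=1$: it gives $2-q$ instead of $1$. The value $a_4-1=q^6+q^5-q^3-1$ you display does not follow from that formula. Moreover, $(q^6+q^5-q^3-1)/(q^2-1)=q^4+q^3+q^2+1\neq (q^3+1)(q^2+1)$; for $q=2$ this is $29$ versus $45$. The recursion you describe actually reads $a_n=a_{n-1}+(q+1)\bigl(q^{2n-2}-a_{n-1}\bigr)=(q+1)q^{2n-2}-q\,a_{n-1}$ with $a_1=1$. Its solution is $a_n=q^{2n-1}+(-1)^n(q-1)q^{n-1}$. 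Then $a_4-1=q^7+q^4-q^3-1=(q^3+1)(q^4-1)$, and dividing by $q^2-1$ gives $(q^3+1)(q^2+1)$. Your line count is derived from the point count, so it holds only once this is corrected.

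\textbf{Two smaller points.} First, in Step 2 you assert that $X\cap T_xX$ is a union of $q+1$ lines through $x$ for an arbitrary $x\in\ell$. This is false for $x\notin X(\mathbb{F}_{q^2})$. Two distinct $\mathbb{F}_{q^2}$-rational lines can meet only in an $\mathbb{F}_{q^2}$-rational point, so a non-rational point of $X$ lies on at most one line. Your rationality argument does not use this claim. For the count you only need it at rational points, where your computation is correct: at $[1:\zeta:0:0]$ the tangent plane is $y_0=-\zeta^q y_1$, on which the equation restricts to $y_2^{q+1}+y_3^{q+1}$, and transitivity does the rest. So state the claim only for $x\in X(\mathbb{F}_{q^2})$.

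Second, you deliberately avoid identifying ${\rm Aut}(X)$ with ${\rm PGU}_4(\mathbb{F}_{q^2})$. You therefore need to say why ${\rm Aut}(X)$ preserves $X(\mathbb{F}_{q^2})$ at all; for $\mathcal{L}(X)$ this is automatic. The previous remark supplies it: $X(\mathbb{F}_{q^2})$ is exactly the set of points of $X$ lying on at least two lines of $X$, and any projective automorphism of $X$ preserves that set.
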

 Since ${\rm Aut}(X)\simeq{\rm PGU}_4(\mathbb{F}_{q^2})$ is very large, $X$ is expected to endow with a variety of combinatorial structures. Indeed, the incidence relation of $X(\mathbb{F}_{q^2})$ and $\mathcal{L}(X)$, and furthermore that of
$\mathcal{L}(X)$ and the set of all tangent planes of $X$ and so on generate certain block designs and strongly regular graphs, see \cite{BC},\cite{C}. On the other hand, an analogue of Proposition \ref{SS} holds for the rational curves of degree $q+1$ contained in $X$.
\begin{prop}[\cite{O},\cite{O2}]\label{O}
Let $X$ and ${\rm Aut}(X)$ be as in Proposition \ref{SS}. Let $\mathcal{R}(X)$ be the set of all the rational curves of degree $q+1$ contained in $X$. Then, if $q\ge3$ the group ${\rm Aut}(X)$ acts transitively on $\mathcal{R}(X)$, and moreover, the number of $\mathcal{R}(X)$ is equal to $q^4(q^3+1)(q^2-1)$, and if $q=2$ then $\mathcal{R}(X)$ is decomposed into infinitely many ${\rm Aut}(X)$-orbits.
\end{prop}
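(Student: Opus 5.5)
Here is the plan. We work with the Fermat model $X=X_I$ and the Hermitian form $H(x,y)=\sum x_i y_i^q$, so that ${\rm Aut}(X)={\rm PGU}_4(\mathbb{F}_{q^2})$. The first step is to put every $C\in\mathcal{R}(X)$ into a normal form. Write $C$ as the image of a map $\phi=(f_0:\dots:f_3)\colon\mathbb{P}^1\to X$ with $f_i$ homogeneous of degree $q+1$ and $\sum f_i^{q+1}=0$.

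The key geometric input is that the tangent plane of $X$ at a point $P$ meets $X$ in a cone of $q+1$ lines through $P$. A smooth rational curve of degree $q+1$ on $X$ that is not a plane curve must therefore meet the $\mathbb{F}_{q^2}$-rational points and lines in a rigid way. We first show that $C$ spans $\mathbb{P}^3$: otherwise $C$ would lie in a plane section of $X$, which is a Hermitian curve or a cone, and these contain no irreducible rational curve of degree $q+1$.

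Next we use the Frobenius-twisted polarity. Composing $\phi$ with the $q$-power map gives the identity $\sum f_i\, f_i^{q}=0$ on $\mathbb{P}^1$. We aim to prove that $C$ is projectively equivalent, under ${\rm PGU}_4$, to a curve parametrized by monomials or near-monomials in $s^{q+1}, s^q t, st^q, t^{q+1}$. The invariant to use is the set of $\mathbb{F}_{q^2}$-points of $X$ lying on $C$, together with the lines of $X$ meeting $C$ in the maximal number of points. Via Proposition~\ref{SS}, transitivity on points and lines lets us move a rational point and an incident line of $C$ into standard position.

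With the normal form in hand, the count is an orbit–stabilizer computation. We have $|{\rm PGU}_4(\mathbb{F}_{q^2})| = q^6(q^2-1)(q^3+1)(q^4-1)$. Dividing by the claimed number $q^4(q^3+1)(q^2-1)$ of curves forces the stabilizer of $C$ to have order $q^2(q^4-1)$, roughly ${\rm PGL}_2(\mathbb{F}_{q^2})$ up to a small factor, acting on $\mathbb{P}^1$. So for $q\ge3$ we must prove two things: (a) every $C$ reduces to a single normal form, which gives transitivity; and (b) the stabilizer of that normal form is exactly the image of the natural ${\rm GL}_2(\mathbb{F}_{q^2})$-type action reparametrizing $\mathbb{P}^1$, which gives the count.

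For $q=2$ we show that the normal-form equations leave a free continuous parameter $\lambda\in k$. We then construct a non-constant invariant of $C$, such as a cross-ratio of the four special points where $C$ meets particular lines of $X$. This yields infinitely many orbits.

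The main obstacle is step (a), the reduction to a single normal form. Concretely, we must solve the functional equation $\sum f_i^{q+1}=0$ for degree-$(q+1)$ forms up to unitary change of coordinates. To attack it, we would first expand each $f_i$ in the basis $s^{q+1},s^qt,st^q,t^{q+1}$ plus the middle terms $s^at^{q+1-a}$ with $2\le a\le q-1$. These middle terms exist only when $q\ge3$; their vanishing is forced by comparing coefficients of $s^{(q+1)q}$-type monomials.

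Comparing coefficients turns the equation into the conditions that a $4\times 4$ coefficient matrix $A$ satisfy ${}^tA^{(q)}JA=0$ for a fixed antidiagonal $J$. This identifies the curves with certain totally isotropic configurations, and Witt's theorem for the Hermitian form then gives transitivity. The degeneration at $q=2$ shows up precisely because there the middle terms are absent and the coefficient equations become underdetermined.
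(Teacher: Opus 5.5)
Note that the paper does not prove this proposition. It cites \cite{O},\cite{O2} and records only the normal form $C_{F_J}$ with ${}^tF_JF_J^{(q)}=J$ and $|{\rm Stab}(C)|=q^2(q^4-1)$. Your orbit--stabilizer frame matches that, but two steps fail.

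\textbf{Nonplanarity cannot be proved.} A plane section of $X$ is either smooth of genus $q(q-1)/2$ or a tangent section $T_PX\cap X$, and the tangent section is a cone of lines only when $P$ is $\mathbb{F}_{q^2}$-rational. This also contradicts your ``key geometric input''. If $P\in X$ is not $\mathbb{F}_{q^2}$-rational, then $T_PX\cap X$ has multiplicity exactly $q$ at $P$, with a single tangent line, and is not a cone. If in addition $P$ lies on none of the finitely many lines of $X$, the section is irreducible, so it is a planar rational curve of degree $q+1$. For $q=2$ these are the cuspidal tangent-plane cubics of the Fermat cubic surface. They form a $2$-dimensional family, so transitivity and the finite count can hold only for \emph{nonplanar} curves. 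That is the setting of \cite{O}; note $C_{F_J}$ spans $\mathbb{P}^3$ because $F_J$ is invertible. So nonplanarity has to be part of the definition of $\mathcal{R}(X)$, not a lemma.

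Granting it, your removal of the middle monomials does work. For $2\le a\le q-1$, the monomial $s^{a+qb}$ in $\sum_i f_if_i^q$ arises only from the pair $(a,b)$. Hence the $a$-th coefficient column is orthogonal to all $q$-twisted columns, which span $k^4$, so it vanishes. You also cannot begin by moving an $\mathbb{F}_{q^2}$-point of $C$ into position via Proposition~\ref{SS}. That every curve is $\mathbb{F}_{q^2}$-rational is a \emph{consequence} of transitivity, as the paper says.

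\textbf{Step (a) is misformulated, and Witt's theorem is the wrong tool.} Write $f=Am$ with $m={}^t(s^{q+1},s^qt,st^q,t^{q+1})$ and set $B={}^tAA^{(q)}$. Then $\sum f_i^{q+1}={}^tm\,B\,m^{(q)}$. Your condition ${}^tA^{(q)}JA=0$ is impossible for invertible $A$. For $q\ge3$ the sixteen exponents coincide only in four pairs. This gives $B_{11}=B_{13}=B_{21}=B_{23}=B_{32}=B_{34}=B_{42}=B_{44}=0$ and $B_{12}+B_{31}=B_{14}+B_{33}=B_{22}+B_{41}=B_{24}+B_{43}=0$. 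So $B$ is governed by an arbitrary $N=\pmat{B_{12}&B_{14}\\B_{22}&B_{24}}\in{\rm GL}_2(k)$, and the paper's $J$ is the case $N=\pmat{0&-1\\1&0}$.

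Since $A$ lives over $k$, Witt's theorem does not apply. What you need instead is the following.
(i) If ${}^tA_1A_1^{(q)}=\lambda\,{}^tA_2A_2^{(q)}$, then $g=A_1A_2^{-1}$ satisfies ${}^tgg^{(q)}=\lambda I$. This forces $g^{(q^2)}=\lambda^{q-1}g$, so $g$ represents an element of ${\rm PGU}_4(\mathbb{F}_{q^2})$.
(ii) A reparametrization $\sigma\in{\rm GL}_2(k)$ replaces $N$ by $\det(\sigma)^q\,{}^t\sigma N\sigma^{(q^2)}$. This action is transitive modulo scalars by Lang's theorem: every $N$ equals ${}^t\tau\,\tau^{(q^2)}$ for some $\tau$.

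Normalizing this residual $2\times2$ datum by reparametrization is the real content of (a), and your plan never identifies it. For $N=\pmat{0&-1\\1&0}$ the stabilizer is $\{\sigma:\sigma^{(q^2)}\in k^\times\sigma\}$, which is exactly ${\rm PGL}_2(\mathbb{F}_{q^2})$, not ``up to a small factor''. This gives the count.

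\textbf{The $q=2$ diagnosis is off.} Middle monomials play no role. What changes is that $2q=q^2$ and $2q+1=q^2+1$, which weakens $B_{13}=B_{32}=B_{23}=B_{42}=0$ to $B_{13}+B_{32}=B_{23}+B_{42}=0$. The admissible $B$ then form a $6$-dimensional family, and each invertible one is realized by a curve via Lang's theorem. Reparametrizations and scalars move $B$ in dimension at most $4$, so there are infinitely many curves. Since ${\rm Aut}(X)$ is finite, this already gives infinitely many orbits; no cross-ratio invariant is needed.
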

In \cite{Sh1}, for the set $\mathcal{R}$ of the rational normal curves totally tangent to a smooth Hermitian variety $\mathcal{H}$, it was shown that ${\rm Aut}(\mathcal{H})$ acts transitively on $\mathcal{R}$, and moreover, the number of $\mathcal{R}$ is equal to $|{\rm PGU}_{n+1}(\mathbb{F}_{q^2})|/|{\rm PGL}_2(\mathbb{F}_q)|$. Further, using the result to the case of a Hermitian curve, \cite{Sh2} constructed three important examples of strongly regular graphs.

The purpose of this paper is to introduce the combinatorial structure related to $\mathcal{R}(X)$ which will be constructed and examined by
using Propositions \ref{SS}, \ref{O} and other results of \cite{O}, investigating
the incidence relation of $\mathcal{R}(X)$ and observing numerical examples.

The next section is a short preparation. Section 3 constructs strongly regular graphs from $X(\mathbb{F}_{q^2})$ and $\mathcal{R}(X)$, and shows that those graphs form a family associated with the generalized quadrangle ${\rm GQ}(q^2,q)$. Then Section 4 studies association schemes defined by intersection of curves in $\mathcal{R}(X)$. Schurian schemes for $X(\mathbb{F}_{q^2})$, $\mathcal{L}(X)$ and $\mathcal{R}(X)$ are examined in Section 5.

\section{Preliminaries}
Let $\mathcal{M}_{m,n}(A)$ denote the set of all $m$-by-$n$ matrices with entries in a ring $A$, it is abbreviated by $\mathcal{M}_{m}(A)$ if $m=n$.
Without loss of generality, a smooth $k$-Hermitian surface $X$ may be replaced by the Fermat surface $X_I$, because they are projectively equivalent, and so the incidence structures
over those surfaces are constant.
Let us recall a result of \cite{O}.
Let $C_{F_J}$ be the curve in $\mathcal{R}(X_I)$ defined by the image of the map
$$
\mathbb{P}^1(k)\ni\pmat{s\\t}\mapsto F_J\pmat{s^{q+1}\\s^qt\\st^q\\t^{q+1}}\in\mathbb{P}^3(k),
$$
where $F_J$ is an element of $\mathcal{M}_4(\mathbb{F}_{q^2})$
such that
\begin{equation*}
\lt {F_J}{F_J}^{(q)}=\pmat{0&0&0&-1\\0&1&0&0\\0&0&1&0\\-1&0&0&0},
\end{equation*}
and ${F_J}^{(q)}$ is the matrix obtained by taking the $q$-th power of each entry of $F_J$.
Then $\mathcal{R}(X_I)={\rm Aut}(X_I)C_{F_J}$ if $q\ge3$, and so all the curves in $\mathcal{R}(X_I)$ are defined over $\mathbb{F}_{q^2}$ since a complete set of representatives of ${\rm Aut}(X_I)$ can
be chosen from matrices with entries in $\mathbb{F}_{q^2}$. Let $O$ be an ${\rm Aut}(X_I)$-orbit of $\mathcal{R}(X_I)$. Note that $O={\rm Aut}(X_I)C_{F_J}$ for $q\ge3$, whereas for $q=2$ there are infinitely many choices of $O$, see \cite{O2}. 

All examples will be presented in this paper were computed by \texttt{GAP} and \texttt{Macaulay2} when $O={\rm Aut}(X_I)C_{F_J}$.

\section{Strongly regular graphs from rational curves}
\begin{const}\label{con1}
Let $V=X_I(\mathbb{F}_{q^2})$ and $O$ an ${\rm Aut}(X_I)$-orbit of $\mathcal{R}(X_I)$. Let
$$
E=\left\{(\bm{v_1},\bm{v_2})\in V\times V\ \middle\vert\ \bm{v_1},\bm{v_2}\in C\ {\rm for}\ {\rm some}\ C\in O \right\}.
$$
Then $(V,E)$ is a rank $3$ strongly regular graph with vertex number $v=(q^3+1)(q^2+1)$ for each $q$.
\end{const}
\begin{ex}
Let $q=2$. Then $G_2=(V,E)$ is of $(v,k,\lambda,\mu)=(45,32,22,24)$ with ${\rm O}_5(3):2$ as the automorphism group of order $51840$. It is the complement graph of the unique rank 3 strongly regular $(45,12,3,3)$ graph, cf. \cite[\S10.17]{BV}.
\end{ex}
The strongly regular $(45,12,3,3)$ graphs have been completely classified up to isomorphism, there are precisely 78 such graphs, see \cite{CDS}.
\begin{ex}
Let $q=3$. Then $G_3=(V,E)$ is of
$(280,243,210,216)$ with ${\rm PSU}_4(3) : D_8$ as the automorphism group of order $26127360$. It is the complement graph of the unique rank 3 strongly regular $(280,36,8,4)$ graph, cf. \cite[p.292]{BV}.
\end{ex}
From the above examples, it turns out that $G_2,G_3$ are obtained as the complement graphs of the point graphs of
${\rm GQ}(q^2,q)$ for $q=2,3$, cf \cite[\S10]{BV}. In fact, we can prove that
for all $q$. To do it, we will prove some auxiliary propositions.

\begin{thm}\label{thmcl1}
If $O={\rm Aut}(X_I)C_{F_J}$ then for each $\bm{p}\in X_I(\mathbb{F}_{q^2})$ there are $C\in O$ and $L\in\mathcal{L}(X_I)$ such that $\bm{p}\in C\cap L$.
\end{thm}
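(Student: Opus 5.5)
The plan is to reduce the statement to two separate ``existence'' facts and then use the transitivity of ${\rm Aut}(X_I)$ on $X_I(\mathbb{F}_{q^2})$ from Proposition \ref{SS}. Because that action is transitive, it suffices to show two things: (a) every $\bm{p}\in X_I(\mathbb{F}_{q^2})$ lies on some $C\in O$; (b) every $\bm{p}\in X_I(\mathbb{F}_{q^2})$ lies on some $L\in\mathcal{L}(X_I)$. Each of (a) and (b) follows once we exhibit a single point of the required kind. Indeed, if $\bm{p}_0$ is $\mathbb{F}_{q^2}$-rational and $\bm{p}_0\in C_0$, and $\bm{p}=g\bm{p}_0$ with $g\in{\rm Aut}(X_I)$, then $\bm{p}\in gC_0$. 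Here $gC_0\in O$ since $O$ is an ${\rm Aut}(X_I)$-orbit, and similarly $gL_0\in\mathcal{L}(X_I)$.

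For (a), I take $C_0=C_{F_J}$ and evaluate the parametrization at $(s,t)=(1,0)$. The image is $F_J\,\lt(1,0,0,0)$, the first column of $F_J$. Since $F_J\in\mathcal{M}_4(\mathbb{F}_{q^2})$, this is an $\mathbb{F}_{q^2}$-rational point. It lies on $X_I$ because $C_{F_J}\subset X_I$. One can also check this directly: the condition $\lt F_J F_J^{(q)}=\pmat{0&0&0&-1\\0&1&0&0\\0&0&1&0\\-1&0&0&0}$ has $(1,1)$-entry $0$, which says exactly that the first column $\bm{f}$ of $F_J$ satisfies $\sum f_i^{q+1}=0$. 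It is a nonzero vector because $F_J$ is invertible, the right-hand side having determinant $-1$.

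For (b), I exhibit an explicit line. Choose $\zeta\in\mathbb{F}_{q^2}$ with $\zeta^{q+1}=-1$; such $\zeta$ exists because the norm map $\mathbb{F}_{q^2}^\times\to\mathbb{F}_q^\times$ is surjective. The line $L_0=\{x_1=\zeta x_0,\ x_3=\zeta x_2\}$ lies on $X_I$, since $x_0^{q+1}+\zeta^{q+1}x_0^{q+1}=0$ and likewise for $x_2,x_3$. It contains the rational point $(1:\zeta:0:0)$. Alternatively, one can quote the classical fact that for a rational point $\bm{p}$ the tangent plane section $T_{\bm{p}}X_I\cap X_I$ is a union of $q+1$ lines through $\bm{p}$.

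Combining (a) and (b) at the same arbitrary point $\bm{p}$ gives $C$ and $L$ with $\bm{p}\in C\cap L$. I expect no real obstacle. The only point needing care is making sure the transitivity of Proposition \ref{SS} applies to the \emph{rational} points and that the translates $gC_{F_J}$ remain in $O$. Both are immediate from the definitions: $O={\rm Aut}(X_I)C_{F_J}$ by hypothesis, and elements of ${\rm Aut}(X_I)$ carry lines of $X_I$ to lines of $X_I$.
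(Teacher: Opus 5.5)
Your proof is correct and follows the paper's route. Both use the transitivity of ${\rm Aut}(X_I)$ on $X_I(\mathbb{F}_{q^2})$ together with one $\mathbb{F}_{q^2}$-rational point on $C_{F_J}$ and one on an explicit line over $\mathbb{F}_{q^2}$ (the paper takes $x_2=\rho x_0$, $x_3=\rho x_1$ with $\rho^{q+1}=-1$). You are slightly more explicit than the paper in naming the rational point $F_J\,\lt(1,0,0,0)$ on $C_{F_J}$ and checking it lies on $X_I$ via the $(1,1)$-entry of $\lt F_J F_J^{(q)}$.
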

\begin{proof}
Since $C_{F_J}$ is defined over $\mathbb{F}_{q^2}$ it passes a point in $X_I(\mathbb{F}_{q^2})$. Therefore by the transitivity of the ${\rm Aut}(X_I)$-action on $X_I(\mathbb{F}_{q^2})$, for each $\bm{p}\in X_I(\mathbb{F}_{q^2})$ there is $C\in O$ such that $\bm{p}\in C$. Since ${\rm Aut}(X_I)$ acts transitively on $\mathcal{R}(X_I)$, the proof completes in the same way by finding a line in $\mathcal{L}(X_I)$ defined over $\mathbb{F}_{q^2}$.
Indeed, let $L_G$ be the line defined by the image of the map
$
\mathbb{P}^1(k)\ni\lt(s,t)\mapsto G\,\lt(s,t)\in\mathbb{P}^3(k),
$
where 
$$
G=\pmat{1&0\\0&1\\\rho&0\\0&\rho}
$$
with $\rho^{q+1}=-1$. 
Then $L_G\in\mathcal{L}(X_I)$ and $\rho\in\mathbb{F}_{q^2}$, and so the lemma has been proven.
\end{proof}
By Theorem \ref{thmcl1} and the transitivity of ${\rm Aut}(X_I)$-action on $\mathcal{R}(X_I)$, we immediately have the following.
\begin{cor}\label{corcl}
Let $O$ be as in Theorem \ref{thmcl1}. Then there is $L\in\mathcal{L}(X_I)$ such that $C_{F_J}\cap L\neq\emptyset$.
\end{cor}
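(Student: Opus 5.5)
The plan is to pull the intersection back to $C_{F_J}$ itself by an automorphism. Theorem \ref{thmcl1} gives a point $\bm{p}\in X_I(\mathbb{F}_{q^2})$, a curve $C\in O$ and a line $L'\in\mathcal{L}(X_I)$ with $\bm{p}\in C\cap L'$. By hypothesis $O={\rm Aut}(X_I)C_{F_J}$, so there is $g\in{\rm Aut}(X_I)$ with $C=gC_{F_J}$.

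Next we set $L=g^{-1}L'$. Since $g^{-1}$ is a projective automorphism of $\mathbb{P}^3(k)$ preserving $X_I$, it maps lines contained in $X_I$ to lines contained in $X_I$. Hence $L\in\mathcal{L}(X_I)$. We then have
$$
g^{-1}\bm{p}\in g^{-1}C\cap g^{-1}L'=C_{F_J}\cap L,
$$
so $C_{F_J}\cap L\neq\emptyset$.

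There is no real obstacle. The only point needing care is that the hypothesis $O={\rm Aut}(X_I)C_{F_J}$ puts the curve $C$ supplied by Theorem \ref{thmcl1} in the same orbit as $C_{F_J}$; this is exactly the hypothesis carried over from Theorem \ref{thmcl1}. Alternatively, one could check directly that $C_{F_J}$ meets some explicit line, but the transport argument above avoids any computation.
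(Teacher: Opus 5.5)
Your argument is correct and is exactly the paper's: take $\bm{p}\in C\cap L'$ from Theorem \ref{thmcl1}, write $C=gC_{F_J}$, and transport everything back by $g^{-1}$. The paper phrases this step as transitivity of ${\rm Aut}(X_I)$ on all of $\mathcal{R}(X_I)$, which is only known for $q\ge3$. You instead use the orbit hypothesis $O={\rm Aut}(X_I)C_{F_J}$, which is slightly more careful, since it covers $q=2$ as well.
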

The following lemma is a key ingredient.
\begin{lem}\label{lemcl}
If $O={\rm Aut}(X_I)C_{F_J}$ then
$|C(\mathbb{F}_{q^2})\cap L(\mathbb{F}_{q^2})|\le1$ for any $C\in O$ and $L\in\mathcal{L}(X_I)$.
\end{lem}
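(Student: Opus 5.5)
We may assume $C=C_{F_J}$: every $C\in O$ has the form $g(C_{F_J})$ with $g\in{\rm Aut}(X_I)$ represented by a matrix over $\mathbb{F}_{q^2}$, and such a $g$ permutes $\mathcal{L}(X_I)$ and preserves $\mathbb{F}_{q^2}$-rationality of points. We argue by contradiction. Suppose $\bm{p}_1\neq\bm{p}_2$ lie in $C_{F_J}(\mathbb{F}_{q^2})\cap L(\mathbb{F}_{q^2})$ for some $L\in\mathcal{L}(X_I)$. Let $h(\bm{x},\bm{y})=\lt\bm{x}\,\bm{y}^{(q)}=\sum_i x_iy_i^q$ be the Hermitian form defining $X_I$. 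A line contained in $X_I$ is totally isotropic for $h$, because $h(\bm{x}+\lambda\bm{y},\bm{x}+\lambda\bm{y})=0$ for all $\lambda\in k$ forces the cross terms to vanish. Hence we would need $h(\bm{p}_1,\bm{p}_2)=0$. The plan is to show that $h(\bm{p}_1,\bm{p}_2)\neq0$ for any two distinct $\mathbb{F}_{q^2}$-points of $C_{F_J}$.

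The first step is to parametrize $C_{F_J}(\mathbb{F}_{q^2})$. Write $u(s,t)=\lt(s^{q+1},s^qt,st^q,t^{q+1})$. Since $F_J$ is invertible over $\mathbb{F}_{q^2}$, the point $F_Ju(s,t)$ is $\mathbb{F}_{q^2}$-rational exactly when $u(s,t)$ is. If $t\neq0$, the ratio of the last two coordinates of $u(s,t)$ is $s/t$, so $s/t\in\mathbb{F}_{q^2}$. The case $t=0$ gives the single point $F_J\,\lt(1,0,0,0)$. The same ratio shows that the parametrization is injective. Therefore the $\mathbb{F}_{q^2}$-points of $C_{F_J}$ are exactly the images of $\mathbb{P}^1(\mathbb{F}_{q^2})$, and there are $q^2+1$ of them.

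The second step is to compute the form. For parameters $(s,t),(s',t')$ we have
$$h(F_Ju(s,t),F_Ju(s',t'))=\lt u(s,t)\,\bigl(\lt F_JF_J^{(q)}\bigr)\,u(s',t')^{(q)},$$
and by the defining identity for $F_J$ this is an explicit bihomogeneous polynomial. For rational parameters we use $x^{q^2}=x$ to simplify it. In the affine chart $t=t'=1$, with $s=a$ and $s'=b$ in $\mathbb{F}_{q^2}$, it becomes a polynomial in $a,a^q,b,b^q$, of the shape $\pm\bigl(a^{q+1}+b^{q+1}-(\text{mixed terms})\bigr)$. The remaining task is to show that this expression vanishes only when $a=b$. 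The same check must be made when one of the two parameters is $(1:0)$; there the expression reduces to a single monomial such as $b^{q+1}$ up to sign, and one has to check that it is nonzero for $b\neq0$. The case $b=0$ needs separate care.

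The main obstacle is the non-vanishing in the affine case, and this is where the bookkeeping of the $q$-th power twists must be done carefully. The first thing I would try is to factor the expression, using $a^{q^2}=a$, as a product of a norm-type factor such as $(a-b)^{q+1}$, or $(a-b)(a-b)^q$, with a unit. The model is that $h$ restricted to the rational normal parametrization should behave like the standard Hermitian pairing on $\mathbb{F}_{q^2}^2$, since $C_{F_J}$ is totally tangent to $X_I$. If the factorization does not come out that cleanly, I would instead fix $b$ and count roots in $a$. The expression has degree $q+1$ in $a$, and the tangency of $C$ to $X_I$ at $\bm{p}_2$ accounts for the root $a=b$ with multiplicity $q+1$. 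That would leave no room for another root $a\neq b$, and hence give the contradiction.
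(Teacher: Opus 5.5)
Your proposal is correct and follows essentially the paper's route: reduce to $C_{F_J}$, use that a line on $X_I$ is totally isotropic so that $h(\bm{p}_1,\bm{p}_2)=0$, and rule this out by evaluating $h$ on the parametrization through $\lt F_JF_J^{(q)}$ (the paper writes $a_2=ca_1$ and is forced to $c=a_1^{q^2-1}=1$). The computation you left open closes exactly as you hoped: for $\mathbb{F}_{q^2}$-rational parameters one gets $h\bigl(F_Ju(s,t),F_Ju(s',t')\bigr)=-(st'-ts')^{q+1}$, which is $-(a-b)^{q+1}$ in the affine chart and the constant $-1$ (not $\pm b^{q+1}$) when one parameter is $(1:0)$, so no separate case $b=0$ and no root-counting fallback is needed.
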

\begin{proof}
By the transitivity on $\mathcal{R}(X_I)$ and Corollary \ref{corcl}, it suffices to prove that $|C_{F_J}(\mathbb{F}_{q^2})\cap L(\mathbb{F}_{q^2})|=1$ for $L\in\mathcal{L}(X_I)$ such that $C_{F_J}\cap L\neq\emptyset$.
Assume that $|C_{F_J}(\mathbb{F}_{q^2})\cap L(\mathbb{F}_{q^2})|\ge2$. Put $L=L_G$, where $G\in\mathcal{M}_{4,2}(\mathbb{F}_{q^2})$ is of rank $2$. Then, since $\mathbb{P}^1(k)\rightarrow C_{F_J}$ is injective, there are $\lt(a_1,b_1),\lt(a_2,b_2)\in\mathbb{P}^1(k)$ with  $\lt(a_1,b_1)\neq\lt(a_2,b_2)$ such that 
\begin{equation}\label{lemcl1}
F_J\pmat{{a_1}^{q+1}&{a_2}^{q+1}\\{a_1}^qb_1&{a_2}^qb_2\\a_1{b_1}^q&a_2{b_2}^q\\{b_1}^{q+1}&{b_2}^{q+1}}=G.
\end{equation}
Since $L_G\subset X_I$ we have
$$
\pmat{{a_1}^{q+1} & {a_1}^q b_1 & a_1{ b_1}^q & {b_1}^{q+1} \\ {a_2}^{q+1} & {a_2}^q b_2 & a_2 {b_2}^q & {b_2}^{q+1}}\lt F_J{F_J}^{(q)}\pmat{{a_1}^{q+1}&{a_2}^{q+1}\\{a_1}^qb_1&{a_2}^qb_2\\a_1{b_1}^q&a_2{b_2}^q\\{b_1}^{q+1}&{b_2}^{q+1}}^{(q)}=\pmat{0&0\\0&0},
$$
or equivalently,
$$
\begin{cases}
-({a_1}^qb_2)^{q+1}+{a_1}^{q^2}{b_1}^q{a_2}^qb_2+{a_1}^q{b_1}^{q^2}a_2{b_2}^q-({b_1}^qa_2)^{q+1}=0,\\
-(b_1{a_2}^q)^{q+1}+{a_1}^{q}{b_1}{a_2}^{q^2}{b_2}^q+{a_1}{b_1}^{q}{a_2}^q{b_2}^{q^2}-(a_1{b_2}^q)^{q+1}=0.
\end{cases}
$$
From these equations, we have 
\begin{center}
$a_1=0\iff a_2=0$ and $b_1=0\iff b_2=0$, 
\end{center}
and so $a_1b_1a_2b_2\neq0$. Therefore without loss of generality, one may put $b_1=b_2=1$ and $a_2=ca_1\neq0$ for $c\neq0,1$. Then $a_1,a_2\in{\mathbb{F}_{q^2}}^\times$ by \eqref{lemcl1}, and furthermore the equations are simplified to
\begin{equation*}
\begin{cases}
(c^{q^2}-c^{q^2+q}){a_1}^{q^2+q}+(c^q-1){a_1}^{q+1}=0,\\
(c^q-1){a_1}^{q^2+q}+(c-c^{q+1}){a_1}^{q+1}=0.
\end{cases}
\end{equation*}
Hence
we have
\begin{center}
$(c^{q^2}{a_1}^{q^2-1}-1)(c^q-1)=0$ and $(c-{a_1}^{q^2-1})(c^{q}-1)=0$,
\end{center}
and thus $c={a_1}^{q^2-1}$ and $c^{q^2}{a_1}^{q^2-1}=1$. However ${a_1}^{q^2-1}=1$ since $a_1\in{\mathbb{F}_{q^2}}^\times$, and so $c=1$
a contradiction, thus the proof has completed.
\end{proof}
It is well-known that $X_I(\mathbb{F}_{q^2})$ and $\mathcal{L}(X_I)$ form ${\rm GQ}(q^2,q)$, its the points graphs whose the parameters are represented by certain polynomials in $q^2,q$, cf. \cite{PT}. Lemma \ref{lemcl} implies that $E$ of Construction \ref{con1} is equal to the set of two vertices not joined by a line in $\mathcal{L}(X_I)$.
Therefore we have the following.
\begin{thm}
Let $O={\rm Aut}(X_I)C_{F_J}$. Then the graphs $(V,E)$ obtained by Construction \ref{con1} are the complement graphs of the point graphs of ${\rm GQ}(q^2,q)$ for all $q$. Their parameters can be written as
$$
(v,k,\lambda,\mu)=\left((q^3+1)(q^2+1),q^5,q(q-1)(q^3+q^2-1),q^3(q^2-1)\right).
$$
\end{thm}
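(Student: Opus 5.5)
The plan is to show that $E$ coincides with the edge set of the complement of the collinearity graph of ${\rm GQ}(q^2,q)$. The points of this quadrangle are $X_I(\mathbb{F}_{q^2})$ and its lines are $\mathcal{L}(X_I)$. The parameters will then follow from the standard formulas for generalized quadrangles. One subtlety is that $E$ as written contains the diagonal pairs $(\bm{v},\bm{v})$; throughout I understand $E$ to consist of pairs of distinct vertices.

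First, every $L\in\mathcal{L}(X_I)$ is defined over $\mathbb{F}_{q^2}$ and carries $q^2+1$ rational points. Every rational point lies on exactly $q+1$ lines. Any two distinct rational points lie on at most one common line, and the GQ axiom holds. This is the well-known fact quoted from \cite{PT}, which I take as given.

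The main step is to prove that for distinct $\bm{v_1},\bm{v_2}\in V$, the pair $(\bm{v_1},\bm{v_2})$ lies in $E$ if and only if $\bm{v_1},\bm{v_2}$ are not collinear in the GQ. The direction "collinear implies not in $E$" is exactly Lemma \ref{lemcl}. Indeed, if $\bm{v_1},\bm{v_2}\in L$ and also $\bm{v_1},\bm{v_2}\in C$ with $C\in O$, then $|C(\mathbb{F}_{q^2})\cap L(\mathbb{F}_{q^2})|\ge 2$, which is a contradiction.

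For the converse I would use rank-3 transitivity. By the Witt-type theory for $\mathrm{PGU}_4(q^2)$ acting on the Hermitian surface, ${\rm Aut}(X_I)$ has exactly two orbits on ordered pairs of distinct rational points: the collinear pairs and the non-collinear pairs. The non-collinear pairs are the pairs of points not orthogonal with respect to the Hermitian form. Since $E$ is ${\rm Aut}(X_I)$-invariant, it suffices to exhibit one non-collinear pair lying on a common curve. The curve $C_{F_J}$ has $q^2+1\ge 2$ rational points, namely the images of $\mathbb{P}^1(\mathbb{F}_{q^2})$. Any two of them are non-collinear by Lemma \ref{lemcl}, so $E$ is nonempty and hence consists of exactly the non-collinear orbit. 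This identifies $(V,E)$ with the complement of the point graph.

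This transitivity on non-collinear pairs is the step I expect to be the main obstacle. I would justify it by Witt's extension theorem for non-degenerate Hermitian forms: a non-orthogonal pair of isotropic vectors spans a hyperbolic plane, and all hyperbolic planes are equivalent. Alternatively, one can cite the known fact that the point graph of ${\rm GQ}(q^2,q)$ is a rank 3 graph under ${\rm PGU}_4$.

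Finally, I would compute the parameters. The point graph of ${\rm GQ}(s,t)$ with $s=q^2$, $t=q$ has parameters $((s+1)(st+1),\,s(t+1),\,s-1,\,t+1)$. Passing to the complement, $v=(q^2+1)(q^3+1)$ and $k=v-1-q^2(q+1)=q^5$. The complement parameters are obtained from $\bar\lambda=v-2-2k'+\mu'$ and $\bar\mu=v-2k'+\lambda'$, where $k'=q^3+q^2$, $\lambda'=q^2-1$ and $\mu'=q+1$. Expanding these gives $q(q-1)(q^3+q^2-1)$ and $q^3(q^2-1)$ respectively, which is a routine check.
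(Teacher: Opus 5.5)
Your proposal is correct and follows the paper's route: Lemma~\ref{lemcl} rules out collinear pairs from $E$ (off the diagonal), and the standard point-graph parameters of ${\rm GQ}(q^2,q)$, after complementation, give $(v,k,\lambda,\mu)$. Where you go beyond the paper is the reverse inclusion. The paper says Lemma~\ref{lemcl} ``implies'' that $E$ equals the non-collinear pairs, but the lemma alone only gives $E\subseteq\{\text{non-collinear pairs}\}$. Your Witt/rank-$3$ argument, together with the observation that $C_{F_J}$ carries $q^2+1\ge 2$ rational points so $E$ is nonempty, supplies the missing direction. The paper leaves this implicit, resting on the unproved rank-$3$ claim in Construction~\ref{con1}.
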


\section{Association schemes from intersection of rational curves}
\begin{const}\label{con2}
Let $O$ be as in Construction 1 and let $\{m_1,\dots,m_d\}$ the set of the intersection numbers $I(C_1,C_2)$ for all $C_1,C_2\in O$ with $C_1\neq C_2$.
Define relations $R_i$ on $O$ for $1\le i\le d$ such that 
\begin{center}
$C_1\overset{R_i}{\sim}C_2$ $\overset{\mathrm{def.}}{\iff}$ $I(C_1,C_2)=m_i$,
\end{center}
together with the identity relation $R_0$. Then $\left(O,\{R_i\}_{i=0}^d\right)$ is a $d$-classes symmetric scheme of order $|O|$, where if $O={\rm Aut}(X_I)C_{F_J}$ or $q\ge3$ then $|O|=
q^4(q^3+1)(q^2-1)$.
\end{const}

\begin{ex}\label{con2ex1}
Let $q=2$. Then $\left(O,\{R_i\}_{i=0}^d\right)$ is of $d=5$ and order $432$, where $\{m_1,\dots,m_5\}=\{1,2,3,4,5\}$. The 1st eigenmatrix $P$ and the 2nd eigenmatrix $Q$ are
$$
\setlength{\arraycolsep}{2pt}
P=\pmat{
1&5&120&180&120&6\\
1&5&60&0&-60&-6\\
1&5& -24&36&-24&6\\
1&5&12&-36&12&6\\
1&5&-12&0&12&-6\\
1&-1&0&0&0&0
}
,
\ \ 
Q=\pmat{
1&6&15&20&30&360\\
1&6&15&20&30&-72\\
1&3&-3&2&-3&0\\
1&0&3&-4&0&0\\
1&-3&-3&2&3&0\\
1&-6&15&20&-30&0
},
$$
and the intersection matrices $L_i$ and the dual intersection matrices $L_i^*$ are as follows.
\begin{table}[H]
  \setlength{\arraycolsep}{1.5pt}
\renewcommand{\arraystretch}{0.8}
  \begin{center}
      \begin{tabular}{ccc}
      \\
           $L_0$&$L_2$&$L_4$\\
\hline
\\
$\pmat{1&0&0&0&0&0\\0&1&0&0&0&0\\0&0&1&0&0&0\\0&0&0&1&0&0\\0&0&0&0&1&0\\0&0&0&0&0&1}$&$\pmat{0 & 0 & 120 & 0 & 0 & 0 \\
0 & 0 & 120 & 0 & 0 & 0 \\
1 & 5 & 54 & 54 & 6 & 0 \\
0 & 0 & 36 & 48 & 36 & 0 \\
0 & 0 & 6 & 54 & 54 & 6 \\
0 & 0 & 0 & 0 & 120 & 0}$&$
\pmat{0 & 0 & 0 & 0 & 120 & 0 \\
0 & 0 & 0 & 0 & 120 & 0 \\
0 & 0 & 6 & 54 & 54 & 6 \\
0 & 0 & 36 & 48 & 36 & 0 \\
1 & 5 & 54 & 54 & 6 & 0 \\
0 & 0 & 120 & 0 & 0 & 0}$\\
\\
$L_1$&$L_3$&$L_5$\\
\hline
\\
$\pmat{0 & 5 & 0 & 0 & 0 & 0 \\
1 & 4 & 0 & 0 & 0 & 0 \\
0 & 0 & 5 & 0 & 0 & 0 \\
0 & 0 & 0 & 5 & 0 & 0 \\
0 & 0 & 0 & 0 & 5 & 0 \\
0 & 0 & 0 & 0 & 0 & 5}$
&$\pmat{0 & 0 & 0 & 180 & 0 & 0 \\
0 & 0 & 0 & 180 & 0 & 0 \\
0 & 0 & 54 & 72 & 54 & 0 \\
1 & 5 & 48 & 72 & 48 & 6 \\
0 & 0 & 54 & 72 & 54 & 0 \\
0 & 0 & 0 & 180 & 0 & 0}$
&$\pmat{0 & 0 & 0 & 0 & 0 & 6 \\
0 & 0 & 0 & 0 & 0 & 6 \\
0 & 0 & 0 & 0 & 6 & 0 \\
0 & 0 & 0 & 6 & 0 & 0 \\
0 & 0 & 6 & 0 & 0 & 0 \\
1 & 5 & 0 & 0 & 0 & 0}$
      \end{tabular}
  \end{center}
\end{table}

\begin{table}[H]
  \setlength{\arraycolsep}{1.7pt}
\renewcommand{\arraystretch}{1.0}
  \begin{center}
      \begin{tabular}{ccc}
      \\
           $L^*_0$&$L^*_2$&$L^*_4$\\ 
\hline
\\
$\pmat{1&0&0&0&0&0\\0&1&0&0&0&0\\0&0&1&0&0&0\\0&0&0&1&0&0\\0&0&0&0&1&0\\0&0&0&0&0&1}$&$\pmat{0 & 0 & 15 & 0 & 0 & 0 \\
0 & 0 & 0 & 0 & 15 & 0 \\
1 & 0 & 6 & 8 & 0 & 0 \\
0 & 0 & 6 & 9 & 0 & 0 \\
0 & 3 & 0 & 0 & 12 & 0 \\
0 & 0 & 0 & 0 & 0 & 15}$&$\pmat{0 & 0 & 0 & 0 & 30 & 0 \\
0 & 0 & 15 & 15 & 0 & 0 \\
0 & 6 & 0 & 0 & 24 & 0 \\
0 & \tfrac{9}{2} & 0 & 0 & \tfrac{51}{2} & 0 \\
1 & 0 & 12 & 17 & 0 & 0 \\
0 & 0 & 0 & 0 & 0 & 30}$\\
\\
$L^*_1$&$L^*_3$&$L^*_5$\\
\hline
\\
$\pmat{0 & 6 & 0 & 0 & 0 & 0 \\
1 & 0 & 0 & 5 & 0 & 0 \\
0 & 0 & 0 & 0 & 6 & 0 \\
0 & \tfrac{3}{2} & 0 & 0 & \tfrac{9}{2} & 0 \\
0 & 0 & 3 & 3 & 0 & 0 \\
0 & 0 & 0 & 0 & 0 & 6}$
&$\pmat{0 & 0 & 0 & 20 & 0 & 0 \\
0 & 5 & 0 & 0 & 15 & 0 \\
0 & 0 & 8 & 12 & 0 & 0 \\
1 & 0 & 9 & 10 & 0 & 0 \\
0 & 3 & 0 & 0 & 17 & 0 \\
0 & 0 & 0 & 0 & 0 & 20}$
&$\pmat{0 & 0 & 0 & 0 & 0 & 360 \\
0 & 0 & 0 & 0 & 0 & 360 \\
0 & 0 & 0 & 0 & 0 & 360 \\
0 & 0 & 0 & 0 & 0 & 360 \\
0 & 0 & 0 & 0 & 0 & 360 \\
1 & 6 & 15 & 20 & 30 & 288}$
      \end{tabular}
  \end{center}
\end{table}
\end{ex}
\begin{ex}\label{con2ex2}
Let $q=3$. Then $\left(O,\{R_i\}_{i=0}^d\right)$ is of $d=10$ and order $18144$, where $\{m_1,\dots,m_{10}\}=\{1,2,3,4,5,6,7,8,10,20\}$. 
\end{ex}
In the above examples for $q=2,3$, note that none of the numbers $m_i$ is zero.
In fact, the following theorem guarantees that it is true for any $q$.
\begin{thm}\label{lem5}
If $O={\rm Aut}(X_I)C_{F_J}$,
two distinct curves of $O$ always intersect.
\end{thm}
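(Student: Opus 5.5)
By transitivity (Proposition \ref{O}) it suffices to take $C_1=C_{F_J}$ and $C_2=gC_{F_J}$ with $g\in{\rm Aut}(X_I)$ and $C_2\neq C_1$. The plan is to cut $C_1$ out of $X_I$ by a quadric and then compute intersection numbers on $X_I$.

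First I would record that the parametrization $\lt(s,t)\mapsto\lt(s^{q+1},s^qt,st^q,t^{q+1})$ lands in the smooth quadric $x_0x_3=x_1x_2$. Hence $C_{F_J}$ lies on the quadric $Q=F_J(\{x_0x_3=x_1x_2\})$. On $Q\simeq\mathbb{P}^1\times\mathbb{P}^1$ the model curve is the graph $(s:t)\mapsto\bigl((s:t),(s^q:t^q)\bigr)$, so it has bidegree $(1,q)$. The intersection $Q\cap X_I$ has bidegree $(q+1,q+1)$, so $Q\cap X_I=C_1+C_1'$ with $C_1'$ of bidegree $(q,1)$ and degree $q+1$.

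I would then identify $C_1'$ explicitly. Substituting the second parametrization $\lt(s,t)\mapsto\lt(s^{q+1},st^q,s^qt,t^{q+1})$ into the equation $\lt x\,{}^{\rm t}\!F_JF_J^{(q)}x^{(q)}=0$ and using the displayed normal form of $\lt F_JF_J^{(q)}$, I expect $C_1'$ to be again a rational curve of degree $q+1$ on $X_I$ defined over $\mathbb{F}_{q^2}$. This is a routine check. Bézout on $X_I$ then gives
$$
C_2\cdot C_1+C_2\cdot C_1'=2(q+1)\qquad(\ast)
$$
for every curve $C_2\neq C_1,C_1'$ of degree $q+1$. The same argument applied to $C_1'$ in place of $C_1$ gives $(C_1')^2=C_1^2$.

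The remaining task is to exclude $C_2\cdot C_1=0$, i.e.\ $C_2\cdot C_1'=2(q+1)$. I would compute $C_1\cdot C_1'$ and $C_1^2$ from adjunction. With $K_{X_I}=(q-3)H$ one gets $C^2=-2-(q-3)(q+1)$, and $(\ast)$ with $C_2=C_1$ gives $C_1\cdot C_1'=2(q+1)-C_1^2=q^2+2q+3$. I would then apply the Hodge index theorem to $D=aC_1+bC_1'+cC_2$ with $D\cdot H=0$. Suppose $C_2\cdot C_1=0$ and $C_2\cdot C_1'=2(q+1)$. The intersection matrix of $C_1,C_1',C_2,H$ is then completely known, and I would check that its restriction to $H^\perp$ fails to be negative semidefinite. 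This is a finite determinant computation in $q$.

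The main obstacle is that last sign check: the matrix might turn out to be consistent, in which case Hodge index alone does not decide. If so, I would fall back on an explicit argument. Write $C_2=gC_{F_J}$ and restrict the quadric $Q$ together with a second quadric containing $C_1$ (for instance one coming from the $2\times2$ minors of $\pmat{x_0&x_1\\x_2&x_3}$ composed with Frobenius). Then show, as in the proof of Lemma \ref{lemcl}, that the resulting equations in $(s,t)$ always have a common solution. A further fallback is a counting argument on $\mathbb{F}_{q^2}$-points: each $C\in O$ carries $q^2+1$ rational points, pairwise non-collinear by Lemma \ref{lemcl}, and a double count over the generalized quadrangle ${\rm GQ}(q^2,q)$ could force a common point.
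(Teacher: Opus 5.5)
There is a genuine gap, and it sits exactly where you feared: the Hodge index step fails for every $q\ge 3$. Your set-up is correct ($Q\cap X_I=C_1+C_1'$ and $(\ast)$). But $Q|_{X_I}\sim 2H$ gives $C_1'\sim 2H-C_1$ on $X_I$, so $C_1'$ adds nothing and the lattice in play is spanned by $H,C_1,C_2$. Note also the arithmetic slip: $C_1\cdot C_1'=2(q+1)-C_1^2=q^2+1$, not $q^2+2q+3$; the two components meet transversally at the $q^2+1$ rational points of $C_1$. With $C^2=-q^2+2q+1$ and $m=C_1\cdot C_2$, the form on $H^\perp$ in the basis $C_1-H,\ C_2-H$ is
\[
\begin{pmatrix} -q(q-1) & m-q-1\\ m-q-1 & -q(q-1)\end{pmatrix}.
\]
This is negative semidefinite iff $-q^2+2q+1\le m\le q^2+1$. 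At $m=0$ its determinant is $(q^2-2q-1)(q^2+1)>0$ for all $q\ge3$, so the matrix is negative definite and Hodge index is perfectly consistent with $C_1\cap C_2=\emptyset$. It proves the statement only for $q=2$, where $C^2=1$ forces $m\ge 1$.

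Neither fallback closes this. For $q\ge 3$, $Q$ is the only quadric through $C_1$: in the coordinates of $(s^{q+1},s^qt,st^q,t^{q+1})$ the ten quadratic monomials restrict to distinct monomials in $s,t$, except $x_0x_3=x_1x_2$. In any case, ``show the equations always have a common solution'' is the theorem restated, with no mechanism offered for an arbitrary $g$.

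The counting fallback cannot work at all. Every point of $X_I(\mathbb{F}_{q^2})$ lies on exactly $|O|(q^2+1)/|X_I(\mathbb{F}_{q^2})|=q^4(q^2-1)$ curves of $O$. So at most $(q^2+1)\bigl(q^4(q^2-1)-1\bigr)<|O|-1$ curves share an $\mathbb{F}_{q^2}$-point with a given $C$; for $q=2$, at least $196$ of the other $431$ curves share none. Such pairs can meet only at non-rational points, which no double count in $\mathrm{GQ}(q^2,q)$ detects.

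The paper's proof is of this same rational-point type and hits the same obstruction. Every $gC$ with $g\in{\rm Stab}(\bm{v})$ passes through $\bm{v}$, so these are only the $q^4(q^2-1)$ curves of $O$ through $\bm{v}$. The order inequality quoted there does not make $g\mapsto gC$ onto $O$. So the missing ingredient is not supplied by the paper either: an argument producing a common point of $C_1$ and $gC_1$ for arbitrary $g\in{\rm Aut}(X_I)$, in general a non-rational one.
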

\begin{proof}
Since all curves of $O$ are defined over $\mathbb{F}_{q^2}$, there is $\bm{v}\in C\cap X_I(\mathbb{F}_{q^2})$ for each $C\in O$. Then $C$ and $g\,C$ for $g\in{\rm Stab}(\bm{v})$ intersect at $\bm{v}$, and furthermore $g$ can be taken so that $C\neq g\,C$, that is, $g\notin{\rm Stab}(C)$ because
$$
|{\rm Stab}(C)|=q^2(q^4-1)<q^6(q^2-1)^2
=|{\rm Stab}(\bm{v})|.
$$
Further the set consisting of $g\,C$ for all such $g$ is equal to $O$, since
$$
|{\rm Stab}(\bm{v})|-|{\rm Stab}(C)|-|O|>0.
$$
Therefore the lemma has been proven.
\end{proof}
\begin{cor}
Let $O$ be as in Theorem \ref{lem5} and let
$$
E=\left\{(C_1,C_2)\in O\times O\ \middle\vert\ C_1\neq C_2,\ C_1\cap C_2\neq\emptyset \right\}.
$$
Then $(O,E)$ is a complete graph with vertex number $v=|O|$. 
\end{cor}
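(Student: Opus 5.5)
The corollary should follow directly from Theorem \ref{lem5}, and I plan to prove it by unwinding the definition of $E$. By definition, $(O,E)$ is complete exactly when every ordered pair $(C_1,C_2)$ of distinct curves in $O$ lies in $E$, that is, when $C_1\cap C_2\neq\emptyset$ for all $C_1\neq C_2$ in $O$. This is precisely the conclusion of Theorem \ref{lem5} under the hypothesis $O={\rm Aut}(X_I)C_{F_J}$. Loops are excluded because the definition of $E$ requires $C_1\neq C_2$, and $E$ is symmetric because intersection is symmetric. So $(O,E)$ is a simple undirected graph on the vertex set $O$, and it is complete.

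For the vertex count, $v=|O|$ holds tautologically. To make it explicit, I would invoke Proposition \ref{O} (or the remark in Construction \ref{con2}) to get $|O|=q^4(q^3+1)(q^2-1)$. For $q\ge3$ this is immediate from transitivity. For $q=2$, $O$ is the orbit of $C_{F_J}$ by hypothesis, and its size comes from the orbit-stabilizer formula $|{\rm Aut}(X_I)|/|{\rm Stab}(C_{F_J})|$ using $|{\rm Stab}(C)|=q^2(q^4-1)$ as quoted in the proof of Theorem \ref{lem5}.

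There is no real obstacle here. The only point worth flagging is that all the substance sits in Theorem \ref{lem5}, whose counting argument has to genuinely show that every curve of $O$ meets a fixed $C$, not merely that some other curve does. Since the statement lets me assume earlier results, I would simply cite that theorem.
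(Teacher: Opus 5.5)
Your proposal is correct and follows the paper's route: the paper states the corollary with no separate argument, as an immediate consequence of Theorem \ref{lem5}, and your orbit--stabilizer check of $|O|=q^4(q^3+1)(q^2-1)$ is a harmless addition. Your caution about Theorem \ref{lem5} is well founded, and that concerns the cited theorem, not your deduction of the corollary: every curve $g\,C$ with $g\in{\rm Stab}(\bm{v})$ passes through $\bm{v}$, yet by double counting ($q^2+1$ rational points per curve, transitivity on $X_I(\mathbb{F}_{q^2})$) only $q^4(q^2-1)$ curves of $O$ pass through $\bm{v}$, so the paper's counting cannot by itself reach all of $O$.
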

\begin{rem}
It is easy to show that there are two distinct lines on $X_I$ not intersecting each other. Indeed, as an example of such two curves we can give their defining equations as follows:
$$
L_1:\ \begin{cases}x_0=\rho x_1\\
x_2=\rho x_3
\end{cases}
,\ \ 
L_2:\ \begin{cases}x_0={\rho}'x_1\\
x_2={\rho}'x_3
\end{cases}
,
$$
where $\rho,{\rho}'\in\mathbb{F}_{q^2}$ with $\rho^{q+1}={{\rho}'}^{q+1}=-1$ and $\rho\neq{\rho}'$.
Thus, Theorem \ref{lem5} stands in striking contrast to the case of the lines.
\end{rem}
We also notice in Examples \ref{con2ex1},\ref{con2ex2} that
$d=|\mathbb{P}^1(\mathbb{F}_{q^2})|$. This observation leads to the following conjecture:
\begin{conj}\label{nin}
If $O={\rm Aut}(X_I)C_{F_J}$ then
$$d=|\mathbb{P}^1(\mathbb{F}_{q^2})|\ {for}\ {all}\ q,$$ 
that is, $\left(O,\{R_i\}_{i=0}^d\right)$ is of $d=(q^2+1)$-classes. 
\end{conj}
To clarify general properties of $\left(O,\{R_i\}_{i=0}^d\right)$ such as this conjecture it seems to need more detailed investigation for the intersection of curves of $O$. It is left for another occasion.

\section{Schurian schemes for rational points, lines and curves}
\begin{const}
\label{con3}
Let $O$ be as in Construction \ref{con1}. Let $S=X_I(\mathbb{F}_{q^2})$, $\mathcal{L}(X_I)$ or $O$ and let
$\left\{R_0,R_1,\dots,R_d\right\}$ the partition of $S\times S$ obtained by the diagonal action of ${\rm Aut}(X_I)$, where $R_0=\left\{(gx,gx)\ \middle\vert\ x\in S,\ g\in{\rm Aut}(X_I)\right\}$. Then $(S,\{R_i\}_{i=0}^d)$ is a $d$-classes scheme of order $|S|$. Especially the case where $S=X_I(\mathbb{F}_{q^2})$ or $\mathcal{L}(X_I)$ is the $2$-classes symmetric scheme corresponding to the point graph or the line graph of ${\rm GQ}(q^2,q)$, respectively.
\end{const}

\begin{ex}
Let $q=2$.
Then $(X_I(\mathbb{F}_{q^2}),\{R_i\}_{i=0}^d)$ is of $d=2$ and order $45$. The eigenmatrices are
$$
\setlength{\arraycolsep}{2pt}
P=\pmat{
1&32&12\\
1&2&-3\\
1&-4&3
}
,
\ \ 
Q=\pmat{
1&24&20\\
1&\frac{3}{2}&-\frac{5}{2}\\
1&-6&5
},
$$
and the intersection matrices and the dual intersection matrices are as follows.
\begin{table}[H]
  \setlength{\arraycolsep}{1.7pt}
\renewcommand{\arraystretch}{1.0}
  \begin{center}
      \begin{tabular}{ccc}
      \\
           $L_0$&$L_1$&$L_2$\\
\hline
\\
$\pmat{1&0&0\\0&1&0\\0&0&1}$&$\pmat{0 & 32 & 0 \\
1 & 22 & 9 \\
0 & 24 & 8}$&$\pmat{0 & 0 & 12 \\
0 & 9 & 3 \\
1 & 8 & 3}$\\
\\
$L^*_0$&$L^*_1$&$L^*_2$\\
\hline
\\
$\pmat{1&0&0\\0&1&0\\0&0&1}$
&$\pmat{0 & 24 & 0 \\
1 & \tfrac{21}{2} & \tfrac{25}{2} \\
0 & 15 & 9}$
&$\pmat{0 & 0 & 20 \\
0 & \tfrac{25}{2} & \tfrac{15}{2} \\
1 & 9 & 10}$
      \end{tabular}
  \end{center}
\end{table}
\end{ex}

\begin{ex}
Let $q=2$.
Then $(\mathcal{L}(X_I),\{R_i\}_{i=0}^d)$ is of $d=2$ and order $27$. The eigenmatrices are
$$
\setlength{\arraycolsep}{2pt}
P=\pmat{
1 & 10 & 16 \\ 1 & 1 & -2 \\ 1 & -5 & 4
}
,
\ \ 
Q=\pmat{
1 & 20 & 6 \\ 1 & 2 & -3 \\ 1 & -\frac{5}{2} & \frac{3}{2}
},
$$
and the intersection matrices and the dual intersection matrices are as follows.
\begin{table}[H]
  \setlength{\arraycolsep}{1.7pt}
\renewcommand{\arraystretch}{1.0}
  \begin{center}
      \begin{tabular}{ccc}
      \\
           $L_0$&$L_1$&$L_2$\\
\hline
\\
$\pmat{1&0&0\\0&1&0\\0&0&1}$&$\pmat{0 & 10 & 0 \\ 1 & 1 & 8 \\ 0 & 5 & 5}$&$\pmat{0 & 0 & 16 \\ 0 & 8 & 8 \\ 1 & 5 & 10}$\\
\\
$L^*_0$&$L^*_1$&$L^*_2$\\
\hline
\\
$\pmat{1&0&0\\0&1&0\\0&0&1}$
&$\pmat{0 & 1 & 0 \\ 20 & \frac{29}{2} & 15 \\ 0 & \frac{9}{2} & 5}$
&$\pmat{0 & 0 & 1 \\ 0 & \frac{9}{2} & 5 \\ 6 & \frac{3}{2} & 0}$
      \end{tabular}
  \end{center}
\end{table}
\end{ex}

\begin{ex}\label{SCS}
Let $q=2$. Then $(O,\{R_i\}_{i=0}^d)$ is a $d=19$-classes and order $432$ scheme which is noncommutative. 

Generally, the character table $P$ of a $d$-classes association scheme $\mathcal{S}$ with vertices $V$ is defined as the matrix in $\mathcal{M}_{r+1,d+1}(\mathbb{C})$ consisting of
the trace values ${\rm Tr}(\varphi_i(A_j))$ of the irreducible representations $\{\varphi_i\}_{i=0}^r$ of the algebra $\mathcal{A}=\mathbb{C}[A_0,\dots,A_d]$ generated by the adjacent matrices $A_0,\dots,A_d$ of $\mathcal{S}$, and the dual character table $Q$ is defined as the matrix in $\mathcal{M}_{d+1,r+1}(\mathbb{C})$ such that 
$$
E_j=|V|^{-1}\sum_{i=0}^dQ_{i,j}A_i
$$
for $0\le j\le r$, where $E_0,\dots,E_r$ are the commutative and primitive idempotents which span the center of $\mathcal{A}$. Then $PQ=|V|D$, where $D$ is the diagonal matrix in $\mathcal{M}_{r+1}(\mathbb{Z})$ with the $i$-th diagonal entry is $P_{i,0}$
for every $i$. 
Note that $X\in\mathcal{M}_{d+1,r+1}(\mathbb{C})$ such that $PX=|V|D$ is not unique.
When ${\rm rank}(\varphi_i)=1$ for all $i$, the scheme is commutative and its the character tables are usually called eigenmatrices.

In this example, $P$ and $Q$ are actually computed as in Table \ref{cts}.
\begin{table}[h]
\caption{The Character tables $P,Q$ for the scheme of Example \ref{SCS}}\label{cts}
\[
\scriptsize
\setlength{\tabcolsep}{0.7pt}
\renewcommand{\arraystretch}{0.8}
\begin{tabular}{c||c|c|c|c|c|c|c|c|c|c|c|c|c|c|c|c|c|c|c|c}
$P$&$A_0$&$A_1$&$A_2$&$A_3$&$A_4$&$A_5$&$A_6$&$A_7$&$A_8$&$A_9$&$A_{10}$&$A_{11}$&$A_{12}$&$A_{13}$&$A_{14}$&$A_{15}$&$A_{16}$&$A_{17}$&$A_{18}$&$A_{19}$\\
\hline\hline
$\varphi_0$&\(1\) & \(1\) & \(20\) & \(20\) & \(5\) & \(10\) & \(20\) & \(20\) & \(10\) & \(5\) & \(10\) & \(10\) & \(30\) & \(30\) & \(30\) & \(30\) & \(30\) & \(30\) & \(60\) & \(60\) \\
$\varphi_1$&\(1\) & \(1\) & \(-2\xi_{21}\) & \(-2\xi_{21}\) & \(-1\) & \(\overline{\xi_{12}}\) & \(-2\overline{\xi_{21}}\) & \(-2\overline{\xi_{21}}\) & \(\xi_{12}\) & \(-1\) & \(\overline{\xi_{12}}\) & \(\xi_{12}\) & \(3\) & \(0\) & \(3\) & \(0\) & \(3\) & \(3\) & \(0\) & \(0\) \\
$\varphi_2$&\(1\) & \(-1\) & \(2\) & \(-2\) & \(1\) & \(-1\) & \(-2\) & \(2\) & \(1\) & \(-1\) & \(1\) & \(-1\) & \(-3\) & \(-6\) & \(3\) & \(6\) & \(3\) & \(-3\) & \(0\) & \(0\) \\
$\varphi_3$&\(1\) & \(-1\) & \(4\overline{\xi_{21}}\) & \(-4\overline{\xi_{21}}\) & \(1\) & \(2\xi_{12}\) & \(-4\xi_{21}\) & \(4\xi_{21}\) & \(-2\overline{\xi_{12}}\) & \(-1\) & \(-2\xi_{12}\) & \(2\overline{\xi_{12}}\) & \(-6\) & \(18\) & \(6\) & \(-18\) & \(6\) & \(-6\) & \(12\xi\) & \(12\overline{\xi}\) \\
$\varphi_4$&\(1\) & \(-1\) & \(4\xi_{21}\) & \(-4\xi_{21}\) & \(1\) & \(2\overline{\xi_{12}}\) & \(-4\overline{\xi_{21}}\) & \(4\overline{\xi_{21}}\) & \(-2\xi_{12}\) & \(-1\) & \(-2\overline{\xi_{12}}\) & \(2\xi_{12}\) & \(-6\) & \(18\) & \(6\) & \(-18\) & \(6\) & \(-6\) & \(12\overline{\xi}\) & \(12\xi\) \\
$\varphi_5$&\(1\) & \(1\) & \(2\) & \(2\) & \(-1\) & \(-1\) & \(2\) & \(2\) & \(-1\) & \(-1\) & \(-1\) & \(-1\) & \(-1\) & \(-4\) & \(-1\) & \(-4\) & \(-1\) & \(-1\) & \(4\) & \(4\) \\
$\varphi_6$&\(1\) & \(-1\) & \(-10\) & \(10\) & \(-5\) & \(5\) & \(10\) & \(-10\) & \(-5\) & \(5\) & \(-5\) & \(5\) & \(-15\) & \(0\) & \(15\) & \(0\) & \(15\) & \(-15\) & \(0\) & \(0\) \\
$\varphi_7$&\(1\) & \(1\) & \(2\) & \(2\) & \(-1\) & \(4\) & \(2\) & \(2\) & \(4\) & \(-1\) & \(4\) & \(4\) & \(-6\) & \(6\) & \(-6\) & \(6\) & \(-6\) & \(-6\) & \(-6\) & \(-6\) \\
$\varphi_8$&\(1\) & \(-1\) & \(-\xi_{11}\) & \(\xi_{11}\) & \(1\) & \(\xi_{21}\) & \(\overline{\xi_{11}}\) & \(-\overline{\xi_{11}}\) & \(-\overline{\xi_{21}}\) & \(-1\) & \(-\xi_{21}\) & \(\overline{\xi_{21}}\) & \(3\) & \(0\) & \(-3\) & \(0\) & \(-3\) & \(3\) & \(6\overline{\xi}\) & \(6\xi\) \\
$\varphi_9$&\(1\) & \(-1\) & \(-\overline{\xi_{11}}\) & \(\overline{\xi_{11}}\) & \(1\) & \(\overline{\xi_{21}}\) & \(\xi_{11}\) & \(-\xi_{11}\) & \(-\xi_{21}\) & \(-1\) & \(-\overline{\xi_{21}}\) & \(\xi_{21}\) & \(3\) & \(0\) & \(-3\) & \(0\) & \(-3\) & \(3\) & \(6\xi\) & \(6\overline{\xi}\) \\
$\varphi_{10}$&\(1\) & \(1\) & \(2\) & \(2\) & \(5\) & \(1\) & \(2\) & \(2\) & \(1\) & \(5\) & \(1\) & \(1\) & \(3\) & \(-6\) & \(3\) & \(-6\) & \(3\) & \(3\) & \(-12\) & \(-12\) \\
$\varphi_{11}$&\(1\) & \(1\) & \(-2\overline{\xi_{21}}\) & \(-2\overline{\xi_{21}}\) & \(-1\) & \(\xi_{12}\) & \(-2\xi_{21}\) & \(-2\xi_{21}\) & \(\overline{\xi_{12}}\) & \(-1\) & \(\xi_{12}\) & \(\overline{\xi_{12}}\) & \(3\) & \(0\) & \(3\) & \(0\) & \(3\) & \(3\) & \(0\) & \(0\) \\
$\varphi_{12}$&\(4\) & \(4\) & \(-4\) & \(-4\) & \(8\) & \(-14\) & \(-4\) & \(-4\) & \(-14\) & \(8\) & \(-14\) & \(-14\) & \(-6\) & \(36\) & \(-6\) & \(36\) & \(-6\) & \(-6\) & \(0\) & \(0\) \\
$\varphi_{13}$&\(4\) & \(-4\) & \(-4\) & \(4\) & \(-8\) & \(-10\) & \(4\) & \(-4\) & \(10\) & \(8\) & \(10\) & \(-10\) & \(6\) & \(12\) & \(-6\) & \(-12\) & \(-6\) & \(6\) & \(0\) & \(0\)
\end{tabular}
\]
\\



\[
\scriptsize
\setlength{\tabcolsep}{0.7pt}
\renewcommand{\arraystretch}{0.8}
\begin{tabular}{c||c|c|c|c|c|c|c|c|c|c|c|c|c|c}
$Q$ & $\varphi_0$ & $\varphi_1$ & $\varphi_2$ & $\varphi_3$ & $\varphi_4$ & $\varphi_5$ & $\varphi_6$ & $\varphi_7$ & $\varphi_8$ & $\varphi_9$ & $\varphi_{10}$ & $\varphi_{11}$ & $\varphi_{12}$ & $\varphi_{13}$ \\ \hline\hline
$A_0$  & $1$ & $30$ & $60$ & $5$ & $5$ & $81$ & $6$ & $24$ & $40$ & $40$ & $20$ & $30$ & $30$ & $60$ \\
$A_1$  & $1$ & $30$ & $-60$ & $-5$ & $-5$ & $81$ & $-6$ & $24$ & $-40$ & $-40$ & $20$ & $30$ & $30$ & $-60$ \\
$A_2$  & $1$ & $-3\,\overline{\xi_{21}}$ & $6$ & $\xi_{21}$ & $\overline{\xi_{21}}$ & $81/10$ & $-3$ & $12/5$ & $-2\,\overline{\xi_{11}}$ & $-2\,\xi_{11}$ & $2$ & $-3\,\xi_{21}$ & $-3/2$ & $-3$ \\
$A_3$  & $1$ & $-3\,\overline{\xi_{21}}$ & $-6$ & $-\xi_{21}$ & $-\overline{\xi_{21}}$ & $81/10$ & $3$ & $12/5$ & $2\,\overline{\xi_{11}}$ & $2\,\xi_{11}$ & $2$ & $-3\,\xi_{21}$ & $-3/2$ & $3$ \\
$A_4$  & $1$ & $-6$ & $12$ & $1$ & $1$ & $-81/5$ & $-6$ & $-24/5$ & $8$ & $8$ & $20$ & $-6$ & $12$ & $-24$ \\
$A_5$  & $1$ & $3\,\xi_{12}$ & $-6$ & $\overline{\xi_{12}}$ & $\xi_{12}$ & $-81/10$ & $3$ & $48/5$ & $4\,\overline{\xi_{21}}$ & $4\,\xi_{21}$ & $2$ & $3\,\overline{\xi_{12}}$ & $-21/2$ & $-15$ \\
$A_6$  & $1$ & $-3\,\xi_{21}$ & $-6$ & $-\overline{\xi_{21}}$ & $-\xi_{21}$ & $81/10$ & $3$ & $12/5$ & $2\,\xi_{11}$ & $2\,\overline{\xi_{11}}$ & $2$ & $-3\,\overline{\xi_{21}}$ & $-3/2$ & $3$ \\
$A_7$  & $1$ & $-3\,\xi_{21}$ & $6$ & $\overline{\xi_{21}}$ & $\xi_{21}$ & $81/10$ & $-3$ & $12/5$ & $-2\,\xi_{11}$ & $-2\,\overline{\xi_{11}}$ & $2$ & $-3\,\overline{\xi_{21}}$ & $-3/2$ & $-3$ \\
$A_8$  & $1$ & $3\,\overline{\xi_{12}}$ & $6$ & $-\xi_{12}$ & $-\overline{\xi_{12}}$ & $-81/10$ & $-3$ & $48/5$ & $-4\,\xi_{21}$ & $-4\,\overline{\xi_{21}}$ & $2$ & $3\,\xi_{12}$ & $-21/2$ & $15$ \\
$A_9$  & $1$ & $-6$ & $-12$ & $-1$ & $-1$ & $-81/5$ & $6$ & $-24/5$ & $-8$ & $-8$ & $20$ & $-6$ & $12$ & $24$ \\
$A_{10}$& $1$ & $3\,\xi_{12}$ & $6$ & $-\overline{\xi_{12}}$ & $-\xi_{12}$ & $-81/10$ & $-3$ & $48/5$ & $-4\,\overline{\xi_{21}}$ & $-4\,\xi_{21}$ & $2$ & $3\,\overline{\xi_{12}}$ & $-21/2$ & $15$ \\
$A_{11}$& $1$ & $3\,\overline{\xi_{12}}$ & $-6$ & $\xi_{12}$ & $\overline{\xi_{12}}$ & $-81/10$ & $3$ & $48/5$ & $4\,\xi_{21}$ & $4\,\overline{\xi_{21}}$ & $2$ & $3\,\xi_{12}$ & $-21/2$ & $-15$ \\
$A_{12}$& $1$ & $3$ & $-6$ & $-1$ & $-1$ & $-27/10$ & $-3$ & $-24/5$ & $4$ & $4$ & $2$ & $3$ & $-3/2$ & $3$ \\
$A_{13}$& $1$ & $0$ & $-12$ & $3$ & $3$ & $-54/5$ & $0$ & $24/5$ & $0$ & $0$ & $-4$ & $0$ & $9$ & $6$ \\
$A_{14}$& $1$ & $3$ & $6$ & $1$ & $1$ & $-27/10$ & $3$ & $-24/5$ & $-4$ & $-4$ & $2$ & $3$ & $-3/2$ & $-3$ \\
$A_{15}$& $1$ & $0$ & $12$ & $-3$ & $-3$ & $-54/5$ & $0$ & $24/5$ & $0$ & $0$ & $-4$ & $0$ & $9$ & $-6$ \\
$A_{16}$& $1$ & $3$ & $6$ & $1$ & $1$ & $-27/10$ & $3$ & $-24/5$ & $-4$ & $-4$ & $2$ & $3$ & $-3/2$ & $-3$ \\
$A_{17}$& $1$ & $3$ & $-6$ & $-1$ & $-1$ & $-27/10$ & $-3$ & $-24/5$ & $4$ & $4$ & $2$ & $3$ & $-3/2$ & $3$ \\
$A_{18}$& $1$ & $0$ & $0$ & $-\xi$ & $\xi$ & $27/5$ & $0$ & $-12/5$ & $4\xi$ & $-4\xi$ & $-4$ & $0$ & $0$ & $0$ \\
$A_{19}$& $1$ & $0$ & $0$ & $\xi$ & $-\xi$ & $27/5$ & $0$ & $-12/5$ & $-4\xi$ & $4\xi$ & $-4$ & $0$ & $0$ & $0$ \\
\end{tabular}
\]
\end{table}
In the table, we put
\begin{center}
$\xi=\sqrt{3}\,i$, $\xi_{11}=1+\sqrt{3}\,i$, $\xi_{12}=1+2\sqrt{3}\,i$, $\xi_{21}=2+\sqrt{3}\,i$ for $i^2=-1$.
\end{center}
Besides, $\overline{\xi}$ denotes the complex conjugate of $\xi$. As usual, we set $A_0=I$ and furthermore $\varphi_0(X):\bm{v}\mapsto\bm{v}X$ for $\bm{v}\in V_0$, where $V_0$ is the eigenspace of ${432}^{-1}J$ for which $J$ is the all-ones matrix of $\mathcal{M}_{432}(\mathbb{Z})$. From the table, the valency
$$
[k_0,\dots,k_{19}]=
[1, 1, 20, 20, 5, 10, 20, 20, 10, 5, 10, 10, 30, 30, 30, 30, 30, 30, 60, 60]
$$
and
$$
[{\rm rank}(E_0),\dots,{\rm rank}(E_{13})]=
[ 1, 30, 60, 5, 5, 81, 6, 24, 40, 40, 20, 30, 30, 60 ].
$$
Further, we have
$$
[{\rm rank}(\varphi_0),\dots,{\rm rank}(\varphi_{13})]=[1,1,1,1,1,1,1,1,1,1,1,1,2,2]
$$
because 
$$
\sum_{i=0}^{13}{\rm rank}(\varphi_i)^2={\rm dim}_{\mathbb{C}}(\mathcal{A})=d+1=20
$$ 
and $P_{i,0}>1$ for $i=12,13$. Hence the multiplicity
$$
[m_0,\dots,m_{13}]=
[1, 30, 60, 5, 5, 81, 6, 24, 40, 40, 20, 30, 15, 30 ].
$$
It should be noticed that all the entries of $P,Q$ are contained in the cyclotomic field of degree $3$.
\end{ex}
\begin{rem}
Since a projective transformation preserves the intersection multiplicity at each point, letting $\{m_1,\dots,m_l\}$ be 
the set of the intersection numbers of all two distinct curves of $O$ and $e$ the number of the intersection numbers distinguished by the intersection multiplicities of those curves, for $(O,\{R_i\}_{i=0}^d)$ of Construction 3 we have
$$
\min\left(d,\mathfrak{p}(m_1)+\dots+\mathfrak{p}(m_l)\right)\ge e,
$$
where $\mathfrak{p}$ denotes the partition function. For $q=2$, by Examples \ref{SCS} and \ref{con2ex1},
$$
d=19\ge\mathfrak{p}(1)+\mathfrak{p}(2)+\mathfrak{p}(3)+\mathfrak{p}(4)+\mathfrak{p}(5)=18\ge e,
$$
and for $q=3$, by Example \ref{con2ex2},
$$
\mathfrak{p}(1)+\dots+\mathfrak{p}(8)+\mathfrak{p}(10)+\mathfrak{p}(20)=735.
$$
If the above LHS is equals to $e$ then $d\ge735$. However the assumption is not immediately clear.
\end{rem}



\begin{thebibliography}{9999999}
\bibitem{BC} R. C. Bose and I. M. Chakravarti, ``Hermitian varieties in a finite projective space {\rm PG}$(N,q^2)$'', {\it Canad.\ J. Math.}\ 18:1161-1182, 1966.
\bibitem{BV} A. E. Brouwer and H. Van Maldeghem, ``Strongly Regular Graphs'', Cambridge University Press, 2022.
\bibitem{C} I. M. Chakravarti, ``Some properties and applications of Hermitian varieties in a finite projective space ${\rm PG}(N,q^2)$ in the construction of strongly regular graphs (two-class association schemes) and block designs'', {\it J. Combin.\ Theory} 11:268-283, 1971.
\bibitem{CDS} K. Coolsaet, J. Degraer and E. Spence, ``The Strongly Regular $(45,12,3,3)$ Graphs'', {\it Electronic journal of combinatorics} 13(1):R32, 2006. 
\bibitem{O} N. Ojiro, ``Rational curves on a smooth Hermitian surface'', {\it Hiroshima Math.\ J.} 49(1):161-173, 2019.
\bibitem{O2} N. Ojiro, ``Rational curves on a smooth Hermitian surface II'',  preprint arXiv:2003.13211.
\bibitem{PT} S. E. Payne and J. A. Thas. ``Finite Generalized Quadrangles'', 1984.
\bibitem{Se} B. Segre, ``Forme e geometrie hermitiane, con particolare riguardo al caso finito'', {\it Ann.\ Mat.\ Pura Appl.}\ (4) 70:1-201, 1965.
\bibitem{Sh1} I. Shimada, ``A note on rational normal curves totally tangent to a Hermitian variety'', {\it Des.\ Codes Cryptogr.}\ 69:299–303, 2013.
\bibitem{Sh2} I. Shimada, ``The graphs of Hoffman-Singleton, Higman-Sims and McLaughlin, and the
Hermitian curve of degree 6 in characteristic 5'', {\it Australas.\ J. Combin.}\ 59:161–181, 2014.
\end{thebibliography}
\end{document}